\newtheorem{thm}{Theorem}
\newtheorem{cor}[thm]{Corollary}
\newtheorem{lem}[thm]{Lemma}
\newtheorem{prop}[thm]{Proposition}
\newtheorem{claim}[thm]{Claim}
\newtheorem{fact}[thm]{Fact}
\theoremstyle{definition}
\newtheorem{rem}{Remark}
\newcommand{\nn}{\mathbb{N}}
\newcommand{\ee}{\varepsilon}
\newcommand{\hhh}{\mathcal{H}}
\newcommand{\uuu}{\mathcal{U}}
\newcommand{\vvv}{\mathcal{V}}
\newcommand{\ccc}{\mathcal{C}}
\newcommand{\SB}{\mathbf{\Sigma}}
\newcommand{\PB}{\mathbf{\Pi}}
\newcommand{\inte}{\mathrm{Int}}
\begin{document}

\title{The Steinhaus property and Haar-null sets}
\author{Pandelis Dodos}
\address{Texas A$\&$M University, Department of Mathematics, 3368 TAMU, College Station,
TX 77843-3368.}
\email{pdodos@math.ntua.gr}

\footnotetext[1]{2000 \textit{Mathematics Subject Classification}: 54H11, 28C10.}
\footnotetext[2]{\textit{Key words}: Haar-null sets, Polish groups,
Steinhaus Theorem, Borel measures.}

\maketitle

%------------------------Abstract-------------------------------%

\begin{abstract}
It is shown that if $G$ is an uncountable Polish group and $A\subseteq G$ is a universally
measurable set such that $A^{-1}A$ is meager, then the set $T_l(A)=\{\mu\in P(G): \mu(gA)=0
\text{ for all } g\in G\}$ is co-meager. In particular, if $A$ is analytic and not left
Haar-null, then $1\in\inte(A^{-1}AA^{-1}A)$.
\end{abstract}

%--------------------------Introduction-----------------------%

\section{Introduction}

The purpose of this paper is to show that there exists a satisfactory extension of the classical
Steinhaus Theorem for an arbitrary Polish group. In order to get the extension one needs, first,
to isolate the appropriate $\sigma$-ideal on which the result will be applied. For the class of
abelian Polish groups this is the $\sigma$-ideal of Haar-null sets, defined by J. P. R.
Christensen \cite{Chr}. However, in non-abelian (and non-locally-compact) Polish groups this
$\sigma$-ideal is no longer well-behaved. Actually, by the results of S. Solecki in \cite{S2},
the Steinhaus property of Haar-null sets fails in ``most" non-abelian Polish groups. Notice also
that the conclusion of the Steinhaus Theorem is rather strong. If $A\subseteq\mathbb{R}$ is
of positive Lebesgue measure, then $A-A$ contains a neighborhood of $0$. If we relax the
conclusion to $A-A$ is not meager\footnote[3]{We recall that a subset $A$ of a topological
space $X$ is said to be \textit{meager} (or \textit{of first category}) if $A$ is covered
by a countable union of closed nowhere dense sets. The complement of a meager set is usually
referred as \textit{co-meager}.}, then this is valid in every abelian Polish group.

To state our result we need some definitions. Let $G$ be a Polish group and $A\subseteq G$ be
a universally measurable set. The set $A$ is said \textit{Haar-null} if there exists $\mu\in
P(G)$ (i.e. $\mu$ is a Borel probability measure on $G$) such that $\mu(g_1Ag_2)=0$ for all
$g_1, g_2\in G$. It is said to be \textit{left Haar-null} if there exists $\mu\in P(G)$ such
that $\mu(gA)=0$ for all $g\in G$. By the results in \cite{ST} and \cite{S2}, the notions of
Haar-null and left Haar-null set are distinct (however, they obviously agree on abelian groups).
We let
\[ T(A)=\{ \mu\in P(G): \mu(g_1Ag_2)=0 \text{ for all } g_1,g_2\in G\} \]
and
\[ T_l(A)=\{ \mu\in P(G): \mu(gA)=0 \text{ for all } g\in G\}. \]
It is easy to see that if $A$ is analytic\footnote[4]{We recall that a subset $A$ of a Polish
space $X$ is said to be \textit{analytic} if there exists a continuous map $f:\nn^\nn\to X$ with
$f(\nn^\nn)=A$. It is a classical result that every Borel subset of a Polish space is analytic.
It is also well-known that an analytic set which is not meager is actually co-meager in a
non-empty open set.}, then both $T(A)$ and $T_l(A)$ are faces (i.e. extreme convex subsets) of
$P(G)$ with the Baire property. It follows by \cite[Theorem 4]{D2} that the sets $T(A)$ and
$T_l(A)$ are either meager, or co-meager. A set $A$ is said to be \textit{generically Haar-null}
if $T(A)$ is co-meager. Respectively, the set $A$ is said to be \textit{generically left
Haar-null} if $T_l(A)$ is co-meager.

For every Polish group $G$ the class of generically left Haar-null subsets of $G$ forms a
$\sigma$-ideal. Notice that if $A$ is not generically left Haar-null, then $A$ should not be
considered as a small set (it is null only for a relatively small set of measures). This is
indeed true as the following theorem demonstrates.
\medskip

\noindent \textbf{Theorem A.} \textit{Let $G$ be an uncountable Polish group and $A$ be a
universally measurable subset of $G$. Assume that $A^{-1}A$ is meager. Then $T_l(A)$ is
co-meager. }

\textit{Thus, if $A$ is analytic and not generically left Haar-null (in particular, not left Haar-null), then $A^{-1}A$ is non-meager.}
\medskip

The locally-compact abelian case of Theorem A can be also derived by the results of M. Laczkovich
in \cite{La}, who proved that if $A$ is not covered by an $F_\sigma$ Haar-measure zero set, then
$A^{-1}A$ is co-meager in a neighborhood of the identity. To see that this implies Theorem A,
one invokes \cite[Proposition 5]{D1} which states that if $G$ is locally-compact and
$A\subseteq G$ is covered by an $F_\sigma$ Haar-null set, then $T_l(A)$ is co-meager. Both M.
Laczkovich's result as well as the result of J. P. R. Christensen \cite{Chr} that Haar-null sets
satisfy the Steinhaus property in abelian Polish groups, are heavily depended on the classical
Steinhaus Theorem. The proof of Theorem A follows quite different arguments. It is based on the
fact that if $\hhh$ is a dense $G_\delta$ and hereditary subset of $K(G)$, then this is witnessed
in the probabilities of $G$.

\subsection{Preliminaries} Our general notation and terminology follows \cite{Kechris}. By
$\nn=\{0,1,2,...\}$ we denote the natural numbers. For any Polish space $X$ by $K(X)$ we denote
the hyperspace of all compact subsets of $X$ with the Vietoris topology and by $P(X)$ the space
of all Borel probability measures on $X$ with the weak* topology. Both are Polish (see
\cite{Kechris}). If $d$ is a compatible complete metric of $X$, then by $d_H$ we denote the
Hausdorff metric on $K(X)$ associated to $d$, defined by
\[ d_H(K,C)=\inf\{ \ee>0: K\subseteq C_\ee \text{ and } C\subseteq K_\ee\} \]
where $A_\ee=\{x\in X:d(x,A)\leq\ee\}$ for every $A\subseteq X$. All balls in $K(X)$ are taken
with respect to $d_H$ and are denoted by $B_H$. In $P(X)$ we consider the so called L\'{e}vy
metric $\rho$, defined by
\begin{eqnarray*}
\rho(\mu,\nu)=
\inf\big\{ \ee>0 & : & \mu(A)\leq \nu(A_\ee)+\ee \text{ and } \nu(A)\leq\mu(A_\ee) +\ee \\
& & \text{for every compact (or Borel) subset } A \text{ of } X \big\}
\end{eqnarray*}
(see \cite{BL} for more details). All balls in $P(X)$ are taken with respect to $\rho$ and are
denoted by $B_P$. If $G$ is a Polish group and $\mu, \nu\in P(G)$, then by $\mu*\nu$ we denote
their convolution, defined by
\[ \mu*\nu(A)=\int_G \mu(Ax^{-1})d\nu(x). \]
A subset $\hhh$ of $K(X)$ is said to be \textit{hereditary} if for every $K\in\hhh$ and every
$C\in K(X)$ with $C\subseteq K$ we have that $C\in\hhh$. All the other pieces of notation we use
are standard.

%--------------Hereditary dense $G_delta$ sets and measures-------------%

\section{Hereditary, dense $G_\delta$ sets and measures}

Throughout this section $X$ will be a Polish space and $\hhh$ a hereditary, dense $G_\delta$
subset of $K(X)$. By $d$ we denote a compatible complete metric of $X$.
\begin{lem}
\label{l1} Let $X$ and $\hhh$ as above. Then there exists a sequence $(\uuu_n)$ of open, dense
and hereditary subsets of $K(X)$ such that $\hhh=\bigcap_n \uuu_n$.
\end{lem}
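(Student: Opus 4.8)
The plan is to dualize and work with complements. Setting $\kkk=K(X)\setminus\hhh$, the hypotheses say exactly that $\kkk$ is a meager $F_\sigma$ set that is \emph{upward closed}: if $L\in\kkk$ and $L\subseteq K$ then $K\in\kkk$ (this is the contrapositive of heredity of $\hhh$). Dually, a set $\uuu\subseteq K(X)$ is open, dense and hereditary precisely when its complement is closed, nowhere dense and upward closed. Hence the lemma reduces to the following task: write $\kkk$ as a countable union $\bigcup_n F_n^{\uparrow}$ of sets that are simultaneously closed, nowhere dense and upward closed; then $\uuu_n=K(X)\setminus F_n^{\uparrow}$ are the desired sets and $\bigcap_n\uuu_n=K(X)\setminus\kkk=\hhh$.

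To produce such a decomposition I would first write $\hhh=\bigcap_n V_n$ with each $V_n$ open; since $\hhh\subseteq V_n$ and $\hhh$ is dense, every $V_n$ is dense, so $F_n:=K(X)\setminus V_n$ is closed and nowhere dense, and $\kkk=\bigcup_n F_n$. These $F_n$ need not be upward closed, so I would replace each by its upward closure $F_n^{\uparrow}=\{K\in K(X):L\subseteq K\text{ for some }L\in F_n\}$. By construction $F_n\subseteq F_n^{\uparrow}$ and $F_n^{\uparrow}$ is upward closed; moreover $F_n^{\uparrow}\subseteq\kkk$ because $\kkk$ is upward closed and contains $F_n$, so $\bigcup_n F_n^{\uparrow}=\kkk$ is unchanged. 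Since $\hhh$ is dense, $\kkk$ has empty interior, and therefore so does the closed set $F_n^{\uparrow}$, making it nowhere dense --- once we know it is closed.

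The one genuinely nontrivial point, and the step I expect to be the main obstacle, is that $F_n^{\uparrow}$ is closed; upward closures of closed sets are not closed in general, and here one must exploit the compactness properties peculiar to the hyperspace $(K(X),d_H)$. I would argue sequentially: given $K_j\in F_n^{\uparrow}$ with $K_j\to K$, choose witnesses $L_j\in F_n$ with $L_j\subseteq K_j$. A convergent sequence in $K(X)$ together with its limit is compact, so the union $M=\overline{K\cup\bigcup_j K_j}$ is a compact subset of $X$ (this is a short total-boundedness check). Then every $L_j$ lies in $K(M)$, which is compact in the Hausdorff metric, so after passing to a subsequence $L_{j_k}\to L$ for some $L\in K(M)$. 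Closedness of $F_n$ gives $L\in F_n$, and since Hausdorff limits preserve inclusion (if $A_k\subseteq B_k$, $A_k\to A$ and $B_k\to B$, then $A\subseteq B$), from $L_{j_k}\subseteq K_{j_k}$ we obtain $L\subseteq K$. Hence $K\in F_n^{\uparrow}$, proving closedness.

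With closedness in hand, each $F_n^{\uparrow}$ is closed, nowhere dense and upward closed, so the sets $\uuu_n=K(X)\setminus F_n^{\uparrow}$ are open, dense and hereditary with $\bigcap_n\uuu_n=\hhh$, which completes the argument.
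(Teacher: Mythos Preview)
Your argument is correct and is essentially the paper's own proof: the paper writes $\hhh=\bigcap_n \vvv_n$, defines $\ccc_n=\{K:\exists\,C\subseteq K,\ C\notin\vvv_n\}$ --- which is precisely your upward closure $F_n^{\uparrow}$ --- and sets $\uuu_n=K(X)\setminus\ccc_n$. The only difference is that the paper dismisses the closedness of $\ccc_n$ as ``easy to check,'' whereas you spell out the compactness argument in detail (correctly).
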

\begin{proof} Write $\hhh=\bigcap_n \vvv_n$ where each $\vvv_n$ is open and dense but not
necessarily hereditary. Fix $n$ and define
\[ \ccc_n=\{ K\in K(X): \exists C\subseteq K \text{ compact with } C\notin \vvv_n\}. \]
It is easy to check that $\ccc_n$ is closed and $\ccc_n \cap \hhh=\varnothing$. So if we set
$\uuu_n=K(X)\setminus \ccc_n$ we see that the sequence $(\uuu_n)$ has all the desired properties.
\end{proof}
In the sequel we will say that the sequence $(\uuu_n)$ obtained by Lemma \ref{l1}, is the
\textit{normal form} of $\hhh$. We need the following lemmas.
\begin{lem}
\label{l2} Let $\uuu\subseteq K(X)$ be open, dense and hereditary. Let also $x_0,...,x_n$ be
distinct points in $X$ and $r_1>0$. Then there exist $y_0,...,y_n$ distinct points in $X$ such
that $d(x_i,y_i)<r_1$ for all $i\in\{0,...,n\}$ and moreover $\{y_0,...,y_n\}\in \uuu$.
\end{lem}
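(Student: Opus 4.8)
The plan is to exploit the density of $\uuu$ to approximate the finite set $\{x_0,\ldots,x_n\}$ in the Hausdorff metric by some member $K$ of $\uuu$, then to pull the desired points $y_0,\ldots,y_n$ out of $K$ and conclude by heredity. First I would record that, since the points $x_0,\ldots,x_n$ are distinct, the quantity $\eta=\min\{d(x_i,x_j):i\neq j\}$ is strictly positive, and I would fix a radius $\delta$ with $0<\delta<\min\{r_1,\eta/2\}$.

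Next, using that $\uuu$ is dense in $K(X)$, I would choose some $K\in\uuu$ lying in the ball $B_H(\{x_0,\ldots,x_n\},\delta)$. By the definition of the Hausdorff metric this yields an $\ee<\delta$ with $\{x_0,\ldots,x_n\}\subseteq K_\ee$, so for every $i$ we have $d(x_i,K)\leq\ee<\delta$; as $K$ is compact this distance is attained, and I would pick $y_i\in K$ with $d(x_i,y_i)<\delta<r_1$, which already secures the approximation requirement.

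It then remains to check that the $y_i$ are distinct and that $\{y_0,\ldots,y_n\}\in\uuu$. For distinctness, for $i\neq j$ the triangle inequality gives $d(y_i,y_j)\geq d(x_i,x_j)-d(x_i,y_i)-d(x_j,y_j)>\eta-2\delta>0$, which is precisely where the choice $\delta<\eta/2$ is used; this is the one point requiring care, since density alone produces a nearby compact set with no a priori control on its cardinality or on the spacing of its points, and a careless choice of $\delta$ could force two of the $y_i$ to coincide. Finally, $\{y_0,\ldots,y_n\}$ is a compact subset of $K\in\uuu$, so heredity of $\uuu$ gives $\{y_0,\ldots,y_n\}\in\uuu$, completing the proof. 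I note that openness of $\uuu$ is not needed for this lemma; only density and heredity are invoked.
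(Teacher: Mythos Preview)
Your proof is correct and follows essentially the same approach as the paper: approximate the finite set $\{x_0,\ldots,x_n\}$ by a member $K$ of the dense family $\uuu$, then extract the $y_i$ from $K$ and invoke heredity. The only cosmetic difference is that the paper phrases the approximation via a Vietoris basic open set (compact sets contained in $\bigcup_i B(x_i,r_1)$ and meeting each ball, after shrinking $r_1$ so the balls are disjoint) rather than via a Hausdorff ball; your explicit handling of distinctness with $\delta<\eta/2$ and your remark that openness of $\uuu$ is not used are both accurate.
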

\begin{proof}
We may assume that $B(x_i,r_1)\cap B(x_j,r_1)=\varnothing$ for all $i,j\in \{0,...,n\}$ with
$i\neq j$. Let
\[ \vvv=\big\{ K: K\subseteq \bigcup_{i=0}^n B(x_i,r_1)\text{ and }
K\cap B(x_i,r_1) \neq\varnothing \ \forall i=0,...,n \big\}.\]
Then $\vvv$ is open. As $\uuu$ is open and dense, there exists $K\in\vvv\cap\uuu$.
For every $i\in\{0,...,n\}$ we select $y_i\in K\cap B(x_i,r_1)$. As $\uuu$ is hereditary,
we see that $\{y_0,...,y_n\}\in\uuu$. Clearly $y_0,...,y_n$ are as desired.
\end{proof}
\begin{lem}
\label{l3} Let $\uuu\subseteq K(X)$ be open, dense and hereditary. Let also $\ee>0$. Then the set
\[ G_{\uuu,\ee}=\{\mu\in P(X): \exists K\in\uuu \text{ with } \mu(K)\geq 1-\ee \}\]
is co-meager in $P(X)$.
\end{lem}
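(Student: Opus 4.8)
The plan is to exhibit a dense open subset of $P(X)$ that is contained in $G_{\uuu,\ee}$; since $P(X)$ is Polish and hence Baire, every dense open set is co-meager, and this suffices. The natural candidate is the strict version
\[ H_{\uuu,\ee}=\{\mu\in P(X): \exists K\in\uuu \text{ with } \mu(K)>1-\ee\}, \]
which is visibly contained in $G_{\uuu,\ee}$. I would prove that $H_{\uuu,\ee}$ is both dense and open.

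For density I would combine the density of the finitely supported measures in $P(X)$ with Lemma \ref{l2}. Given $\nu_0\in P(X)$ and $r>0$, choose a finitely supported $\sigma=\sum_{i=0}^n a_i\delta_{x_i}$ with $x_0,\dots,x_n$ distinct and $\rho(\nu_0,\sigma)<r/2$, and apply Lemma \ref{l2} to produce distinct $y_0,\dots,y_n$ with $d(x_i,y_i)<r_1$ and $\{y_0,\dots,y_n\}\in\uuu$. A direct estimate from the definition of $\rho$ shows that $\mu=\sum_{i=0}^n a_i\delta_{y_i}$ satisfies $\rho(\sigma,\mu)\le r_1$, so taking $r_1<r/2$ gives $\rho(\nu_0,\mu)<r$. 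Since $\mu(\{y_0,\dots,y_n\})=1>1-\ee$ and $\{y_0,\dots,y_n\}\in\uuu$, we get $\mu\in H_{\uuu,\ee}$, proving density.

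The crux, and the step I expect to be the real obstacle, is the openness of $H_{\uuu,\ee}$. Fix $\mu\in H_{\uuu,\ee}$ and $K\in\uuu$ with $\mu(K)=1-\ee+\beta$, $\beta>0$. Since $\uuu$ is open, choose $\eta>0$ with $B_H(K,\eta)\subseteq\uuu$, and then $\delta>0$ with $\delta<\eta$ and $\delta<\beta/3$. For any $\nu$ with $\rho(\mu,\nu)<\delta$, the definition of the L\'{e}vy metric gives $\mu(K)\le\nu(K_\delta)+\delta$, hence $\nu(K_\delta)>1-\ee+2\beta/3$. The difficulty is that $K_\delta$ need not be compact in a general Polish space, so one cannot use it as the required element of $\uuu$ directly. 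Here I invoke tightness of the Borel measure $\nu$ to obtain a compact $C\subseteq K_\delta$ with $\nu(C)>1-\ee+\beta/3$. To convert $C$ into a member of $\uuu$ I pass to $C\cup K$: it is compact, contains $K$, and satisfies $C\cup K\subseteq K_\delta$, so $d_H(K,C\cup K)\le\delta<\eta$ and therefore $C\cup K\in B_H(K,\eta)\subseteq\uuu$. Since $\nu(C\cup K)\ge\nu(C)>1-\ee$, we conclude $\nu\in H_{\uuu,\ee}$, so $B_P(\mu,\delta)\subseteq H_{\uuu,\ee}$ and $H_{\uuu,\ee}$ is open.

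Combining the two steps, $H_{\uuu,\ee}$ is a dense open subset of $P(X)$ with $H_{\uuu,\ee}\subseteq G_{\uuu,\ee}$, so $G_{\uuu,\ee}$ is co-meager. The enabling facts are the explicit L\'{e}vy-metric description (to transfer mass from $\mu$ to a neighbour $\nu$ at the cost of a small enlargement of $K$) and tightness of Borel probability measures on Polish spaces (to recover a compact set inside the non-compact enlargement $K_\delta$); the $C\cup K$ device is what keeps the enlarged set within the open ball $B_H(K,\eta)$ while losing no measure.
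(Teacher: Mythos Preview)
Your proof is correct and follows essentially the same approach as the paper: both arguments use Lemma~\ref{l2} together with the density of finitely supported measures to produce measures concentrated on a finite set in $\uuu$, and then use the L\'{e}vy-metric inequality $\mu(K)\le\nu(K_\delta)+\delta$, inner regularity, and the $C\cup K$ device to show that a small $\rho$-ball around such a measure stays inside $G_{\uuu,\ee}$. The only difference is organizational---you isolate the strict set $H_{\uuu,\ee}$ and prove it dense and open, while the paper directly shows that every open $V\subseteq P(X)$ contains an open $W\subseteq G_{\uuu,\ee}$ centered at a finitely supported measure---but the underlying estimates are identical.
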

\begin{proof} Fix $\uuu$ and $\ee>0$ as above. We will show that for every $V\subseteq P(X)$ open
there exists $W\subseteq V$ open such that $W\subseteq G_{\uuu,\ee}$. This will finish the proof
(actually it implies that $G_{\uuu,\ee}$ contains a dense open set). So let $V\subseteq P(X)$
open. As finitely supported measures are dense in $P(X)$, we may select
$\nu=\sum_{i=0}^n a_i \delta_{x_i}$ and $r>0$ such that
\begin{enumerate}
\item[(1)] $a_i>0$ for all $i\in\{0,...,n\}$ and $\sum_{i=0}^n a_i=1$,
\item[(2)] $B_P(\nu,r)\subseteq V$.
\end{enumerate}
By Lemma \ref{l2}, there exist $y_0,...,y_n$ distinct points in $X$ with $\{y_0,...,y_n\}\in \uuu$
and such that $d(x_i,y_i)<\frac{r}{2}$ for all $i\in\{0,...,n\}$. We set
$\mu=\sum_{i=0}^n a_i \delta_{y_i}$. Then it is easy to see that
\begin{enumerate}
\item[(3)] $\rho(\mu,\nu)\leq \frac{r}{2}$.
\end{enumerate}
Let $F=\{y_0,...,y_n\}$. As $\uuu$ is open and $F\in\uuu$ there exists $\theta>0$ such that
\begin{enumerate}
\item[(4)] $\theta<\min\{\frac{\ee}{3},\frac{r}{3}\}$ and
\item[(5)] $B_H(F,2\theta)\subseteq \uuu$.
\end{enumerate}
Then $W=B_P(\mu,\theta)$ is as desired. Indeed, by (2), (3) and (4) it is clear that $W$ is a
subset of $V$. We only need to check that $W$ is a subset of $G_{\uuu,\ee}$. Let $\lambda\in W$
arbitrary. Then $\rho(\lambda,\mu)<\theta$ and so
\[ 1=\mu(F)\leq \lambda(F_{\theta}) +\theta \]
which gives that $\lambda(F_\theta)\geq 1-\frac{\ee}{3}$ by the choice of $\theta$. By the inner
regularity of $\lambda$, there exists $C\subseteq F_\theta$ compact such that
$\lambda(C)\geq 1-\ee$. We set $K=C\cup F$. Then $d_H(K,F)\leq\theta$ and so, by (5), $K\in \uuu$.
Moreover, $\lambda(K)\geq \lambda(C)\geq 1-\ee$. This implies that $\lambda\in G_{\uuu,\ee}$ and
the proof is completed.
\end{proof}
Our goal in this section is to prove the following.
\begin{prop}
\label{p1} Let $\hhh$ be a hereditary, dense $G_\delta$ subset of $K(X)$. Then the set
\[ G_\hhh=\{ \mu\in P(X): \forall \ee>0 \ \exists K\in \hhh \text{ with } \mu(K)\geq 1-\ee \} \]
is co-meager in $P(X)$.
\end{prop}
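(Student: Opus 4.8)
The plan is to express $G_\hhh$ as a countable intersection of the co-meager sets $G_{\uuu_n, 1/k}$ from Lemma~\ref{l3}, where $(\uuu_n)$ is the normal form of $\hhh$ given by Lemma~\ref{l1}. Let me think about whether that intersection actually equals $G_\hhh$.

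We have $\hhh = \bigcap_n \uuu_n$. The set $G_\hhh$ asks: for every $\ee > 0$ there is $K \in \hhh$ with $\mu(K) \geq 1 - \ee$. By Lemma~\ref{l3}, each $G_{\uuu_n, \ee}$ is co-meager, and a countable intersection of co-meager sets is co-meager. So the natural candidate is $\bigcap_n \bigcap_k G_{\uuu_n, 1/k}$, which is co-meager. The issue is that membership in this intersection gives, for each $n$ and each $k$, SOME compact $K_{n,k} \in \uuu_n$ with $\mu(K_{n,k}) \geq 1 - 1/k$ — but these compact sets depend on $n$, whereas to land in $G_\hhh$ I need a SINGLE $K$ that lies in ALL the $\uuu_n$ simultaneously (i.e. $K \in \hhh$) and has large measure. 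So the main obstacle is promoting the family $\{K_{n,k}\}_n$ (for fixed large measure level) into one common compact set in $\hhh$.

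Let me think about how the hereditary property resolves this. Fix $\ee > 0$. Pick a summable sequence, say $\ee_n = \ee / 2^{n+1}$. For each $n$, if $\mu \in G_{\uuu_n, \ee_n}$ there is $K_n \in \uuu_n$ with $\mu(K_n) \geq 1 - \ee_n$. Now set $K = \bigcap_n K_n$. This is compact (a decreasing-or-arbitrary intersection of compact sets is compact), and by the union bound $\mu(X \setminus K) \leq \sum_n \mu(X \setminus K_n) \leq \sum_n \ee_n = \ee$, so $\mu(K) \geq 1 - \ee$. The key point is that $K \subseteq K_n$ for every $n$, and since $K_n \in \uuu_n$ with $\uuu_n$ hereditary, we get $K \in \uuu_n$ for every $n$, hence $K \in \bigcap_n \uuu_n = \hhh$. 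This is exactly where heredity of the $\uuu_n$ (guaranteed by the normal form of Lemma~\ref{l1}) does the essential work: it lets me shrink the individual witnesses to a common one without leaving any $\uuu_n$.

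So the clean argument is: fix $\ee > 0$, and show that any $\mu$ lying in the co-meager set $\bigcap_n G_{\uuu_n, \ee/2^{n+1}}$ satisfies the $\ee$-requirement of $G_\hhh$ via the intersection $K = \bigcap_n K_n$. Since $\ee > 0$ was arbitrary and $G_\hhh$ only requires the condition for each $\ee$, it suffices to work with a single sequence $\ee \to 0$. Concretely I would define, for each $m \geq 1$, the co-meager set $\bigcap_n G_{\uuu_n, (1/m) 2^{-(n+1)}}$ and intersect over $m$; the resulting countable intersection $G$ is co-meager, and every $\mu \in G$ belongs to $G_\hhh$ by the shrinking argument above applied with $\ee = 1/m$ for each $m$. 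I expect no serious technical friction beyond verifying that $\bigcap_n K_n$ is compact and that the union bound applies; the conceptual crux is simply the heredity-plus-intersection trick that converts a sequence of level-$n$ witnesses into one global witness in $\hhh$.
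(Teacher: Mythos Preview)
Your proposal is correct and follows essentially the same approach as the paper: take the normal form $(\uuu_n)$, use Lemma~\ref{l3} to see that each $G_{\uuu_n,\ee}$ is co-meager, and then for a measure in the countable intersection use the heredity of the $\uuu_n$ to pass from individual witnesses $K_n\in\uuu_n$ to the single witness $K=\bigcap_n K_n\in\hhh$ via a summable-error argument. The paper's write-up differs only cosmetically (it phrases things as $G_{n,m}$ with $\ee=1/(m+1)$ and asserts the equality $G_\hhh=\bigcap_{n,m}G_{n,m}$ rather than just the needed inclusion).
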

\begin{proof}
Let $(\uuu_n)$ be the normal form of $\hhh$. For every $n,m\in \nn$ let
\[ G_{n,m}=\big\{ \mu\in P(X): \exists K\in \uuu_n \text{ with } \mu(K)\geq 1-\frac{1}{m+1}\big\}.\]
By Lemma \ref{l3}, we have that $G_{n,m}$ is co-meager. Hence, so is $\bigcap_{n,m} G_{n,m}$.
We claim that $G_\hhh=\bigcap_{n,m} G_{n,m}$. This will finish the proof. It is clear that
$G_\hhh\subseteq\bigcap_{n,m} G_{n,m}$. Conversely, fix $\mu\in \bigcap_{n,m} G_{n,m}$ and let
$\ee>0$ arbitrary. Pick a sequence $(\ee_n)$ of positive reals such that
\[ \sum_{n\in\nn} \ee_n<\frac{\ee}{2}. \]
Pick also a sequence $(m_n)$ of natural numbers with $\frac{1}{m_n+1}\leq\ee_n$ for every
$n\in\nn$. As
\[ \mu\in \bigcap_{n,m} G_{n,m} \subseteq \bigcap_n G_{n,{m_n}}\]
we may select a sequence $(K_n)$ in $K(X)$ such that
\begin{enumerate}
\item[(1)] $K_n\in \uuu_n$ and
\item[(2)] $\mu(K_n)\geq 1-\frac{1}{m_n+1}\geq 1-\ee_n$.
\end{enumerate}
For every $n\in\nn$ we let $F_n= \bigcap_{i=0}^n K_i$ and we set $F=\bigcap_n K_n$.
Then $F_n\downarrow F$. Notice that $F\in \uuu_n$ as $F\subseteq F_n\subseteq K_n\in\uuu_n$
and $\uuu_n$ is hereditary. Hence $F\in \bigcap_n \uuu_n=\hhh$. Moreover, by (2) above, we have
\[ \mu(F_n)=\mu(K_0\cap ... \cap K_n)\geq 1- \sum_{k=0}^n \ee_k. \]
As $F_n\downarrow F$ we get that
\[ \mu(F)=\lim_{n\in\nn} \mu(F_n)\geq 1-\sum_{n\in\nn} \ee_n \geq 1-\ee. \]
This shows that $\mu\in G_\hhh$, as desired.
\end{proof}

%--------Left Haar-null sets in Polish groups---------%

\section{Left Haar-null sets in Polish groups}

Our aim is to give the proof of Theorem A stated in the introduction.
\begin{proof}[Proof of Theorem A]
Let $G$ be an uncountable Polish group and $A$ be a universally measurable subset of $G$ such that
$A^{-1}A$ is meager. We select a sequence $(C_n)$ of closed, nowhere dense subsets of $G$ with the
following properties.
\begin{enumerate}
\item[(i)] $1\notin C_n$ for all $n\in\nn$.
\item[(ii)] $A^{-1}A\setminus\{1\} \subseteq \bigcup_n C_n$.
\end{enumerate}
For every $n\in\nn$ let
\[ \uuu_n=\{ K\in K(G): K^{-1}K \cap C_n=\varnothing \}. \]
Clearly every $\uuu_n$ is hereditary. Moreover, as the function $f:K(G)\to K(G)$
defined by $f(K)=K^{-1}K$ is continuous, we see that every $\uuu_n$ is open.
\begin{claim}
\label{cl1} For every $n\in\nn$ the set $\uuu_n$ is dense in $K(G)$.
\end{claim}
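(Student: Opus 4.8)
The plan is to combine the density of finite subsets in $K(G)$ with a point-by-point perturbation argument. Since the finite subsets of $G$ are dense in $K(G)$ (every compact set admits a finite $\ee$-net, which is within $\ee$ of it in $d_H$), it suffices to show that each finite set is a $d_H$-limit of members of $\uuu_n$. So I would fix a finite set $F=\{x_0,\dots,x_k\}\subseteq G$ and $r>0$, and produce points $y_0,\dots,y_k$ with $d(x_i,y_i)<r$ for every $i$ and such that $y_i^{-1}y_j\notin C_n$ for all $i,j$. Setting $K=\{y_0,\dots,y_k\}$ then gives $K^{-1}K\cap C_n=\varnothing$, that is $K\in\uuu_n$, while $d_H(K,F)\le r$, which is exactly what density requires.

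The points $y_m$ are to be chosen inductively. Suppose $y_0,\dots,y_{m-1}$ have already been selected. Beyond the requirement $y_m\in B(x_m,r)$, the only new constraints concern products that involve $y_m$ together with an earlier point: for each $i<m$ I need $y_i^{-1}y_m\notin C_n$ and $y_m^{-1}y_i\notin C_n$. A direct computation shows these are equivalent to $y_m\notin y_iC_n$ and $y_m\notin y_iC_n^{-1}$ respectively. The key observation is that both $y_iC_n$ and $y_iC_n^{-1}$ are nowhere dense: inversion $g\mapsto g^{-1}$ and left translation $g\mapsto y_ig$ are homeomorphisms of $G$, and a homeomorphism carries a nowhere dense set to a nowhere dense set, so each of these sets is nowhere dense because $C_n$ is. Hence the forbidden set $\bigcup_{i<m}(y_iC_n\cup y_iC_n^{-1})$ is a finite union of closed nowhere dense sets, so it is closed and nowhere dense, and its complement is open and dense. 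This complement therefore meets the nonempty open ball $B(x_m,r)$, and choosing $y_m$ in the intersection completes the inductive step.

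Finally, the diagonal products satisfy $y_i^{-1}y_i=1\notin C_n$ by property (i) of the sequence $(C_n)$, so every product $y_i^{-1}y_j$ avoids $C_n$, and the construction yields the desired $K\in\uuu_n$ with $d_H(K,F)\le r$. I expect the only genuine point to verify to be the nowhere-density of $y_iC_n$ and $y_iC_n^{-1}$, which is precisely where the group structure is used; the remainder is routine Baire-category bookkeeping over a finite index set. Note that uncountability of $G$ is not actually needed for this claim — only that each $C_n$ is closed and nowhere dense and omits the identity.
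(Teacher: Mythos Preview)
Your argument is correct and follows essentially the same route as the paper: reduce to approximating finite sets in $K(G)$, then build the approximating set by an inductive perturbation that at each step avoids the finitely many closed nowhere dense sets $y_iC_n$ and $y_iC_n^{-1}$. The paper additionally forces the $y_i$ to be pairwise distinct, but as you note this is unnecessary since $1\notin C_n$ handles the diagonal products.
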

\begin{proof}[Proof of Claim \ref{cl1}]
As finite sets are dense in $K(G)$, it is enough to show that for every finite subset
$\{x_0,...,x_l\}$ of $G$ and every $r>0$ there exist $y_0,...,y_l$ distinct points in $G$ with
\[\big\{ y_i^{-1}y_j: i,j\in\{0,...,l\} \text{ with } i\neq j\big\} \cap C_n=\varnothing\]
and such that $d(x_i,y_i)\leq r$ for all $i\in\{0,...,l\}$ (here $d$ is simply a compatible
complete metric of $G$). The points $y_0,...,y_l$ will be chosen by recursion. We set $y_0=x_0$.
Assume that $y_0,...,y_k$ have been chosen for some $k<l$ so as $\big\{ y_i^{-1}y_j:
i,j\in\{0,...,k\} \text{ with } i\neq j\big\} \cap C_n=\varnothing$. For every $g\in G$ the
functions $x\mapsto gx^{-1}$ and $x\mapsto gx$ are homeomorphisms. It follows that the set
$F_k=\bigcup_{i=0}^k (y_iC_n^{-1}\cup y_iC_n)$ is a closed set with empty interior. Hence there
exists $y_{k+1}\in B(x_{k+1},r)$ such that $y_{k+1}\notin F_k\cup\{ y_0,...,y_k\}$. This implies
that for every $i\in\{0,...,k\}$ we have $y_{k+1}^{-1}y_i\notin C_n$ and $y_i^{-1}y_{k+1}\notin
C_n$. This completes the recursive selection and the proof of the claim is completed.
\end{proof}
It follows by the above claim that the set $\hhh=\bigcap_n \uuu_n$ is a hereditary, dense
$G_\delta$ subset of $K(G)$ and that $(\uuu_n)$ is a normal form of $\hhh$. Notice that if
$K\in \hhh$, then $K^{-1}K \cap A^{-1}A =\{1\}$. By Proposition \ref{p1}, we have that the set
\[ B_1=\{ \mu\in P(G): \forall \ee>0 \ \exists K\in\hhh \text{ with } \mu(K)\geq 1-\ee \} \]
is co-meager. Our assumption that $G$ is uncountable implies that the Polish group $G$ viewed
as a topological space is perfect. Hence, the set of all non-atomic Borel probability measures
on $G$ is co-meager in $P(G)$ (see \cite{Kn}, or \cite{PRV}). It follows that the set
\[ B_2 =\{ \mu\in P(G): \mu \text{ is non-atomic and } \mu\in B_1\}\]
is co-meager in $P(G)$. We will show that $B_2\subseteq T_l(A)$. This will finish the proof.
We need the following fact (its easy proof is left to the reader).
\begin{fact}
Let $\mu\in P(G)$. Then $\mu\in T_l(A)$ if and only if for every $\nu\in P(G)$ we have
$\nu*\mu(A)=0$.
\end{fact}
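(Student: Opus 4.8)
The plan is to prove both directions of the Fact at once by first converting $\nu*\mu(A)$ into an integral of left translates of $A$ against $\nu$. I would start from the defining formula, which after interchanging the roles of the two measures reads $\nu*\mu(A)=\int_G \nu(Ax^{-1})\,d\mu(x)$. Since $y\in Ax^{-1}$ is equivalent to $yx\in A$, the inner quantity can be written as $\nu(Ax^{-1})=\int_G \mathbf{1}_A(yx)\,d\nu(y)$, and then an application of Fubini's theorem yields the key identity
\[ \nu*\mu(A)=\int_G\int_G \mathbf{1}_A(yx)\,d\nu(y)\,d\mu(x)=\int_G \mu(y^{-1}A)\,d\nu(y), \]
using that $\{x:yx\in A\}=y^{-1}A$. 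Once this identity is in hand, both implications are immediate.

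For the forward implication I would assume $\mu\in T_l(A)$, so that $\mu(gA)=0$ for every $g\in G$; in particular $\mu(y^{-1}A)=0$ for all $y$, the integrand above vanishes identically, and hence $\nu*\mu(A)=0$ for every $\nu\in P(G)$. For the converse I would assume $\nu*\mu(A)=0$ for all $\nu$, fix an arbitrary $g\in G$, and test the identity with the Dirac mass $\nu=\delta_{g^{-1}}$: the integral then collapses to the single value $\mu\big((g^{-1})^{-1}A\big)=\mu(gA)$, forcing $\mu(gA)=0$. Since $g$ is arbitrary, $\mu\in T_l(A)$, completing the equivalence.

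The main obstacle, and indeed the only point in this ``easy'' proof that needs care, is the legitimacy of the Fubini interchange, because $A$ is merely universally measurable rather than Borel. To handle this I would observe that multiplication $m:G\times G\to G$ is continuous and that the preimage of a universally measurable set under a Borel map is again universally measurable; consequently $m^{-1}(A)=\{(y,x):yx\in A\}$ is universally measurable in $G\times G$, hence measurable with respect to the completion of the Borel product measure $\nu\times\mu$. Fubini's theorem for complete finite product measures then applies, simultaneously ensuring that $y\mapsto\mu(y^{-1}A)$ is $\nu$-measurable and justifying the interchange. Everything else is formal, so this measurability bookkeeping is where the slight substance of the argument resides.
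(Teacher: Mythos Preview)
Your argument is correct. The paper leaves this Fact to the reader, so there is no written proof to compare against; your route---rewriting $\nu*\mu(A)$ via Fubini as $\int_G \mu(y^{-1}A)\,d\nu(y)$ and then reading off both implications (the converse by testing with $\nu=\delta_{g^{-1}}$)---is exactly the natural one, and your remark that $m^{-1}(A)$ is universally measurable (so that Fubini for the completion of $\nu\times\mu$ is legitimate) correctly addresses the only nontrivial point.
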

Fix $\mu\in B_2$. By the above fact, in order to verify that $\mu\in T_l(A)$ we have to show that
$\nu*\mu(A)=0$ for every $\nu\in P(G)$. So, let $\nu\in P(G)$ arbitrary. Let also $\ee>0$
arbitrary. As $\mu\in B_2\subseteq B_1$, there exists $K\in\hhh$ with $\mu(K)\geq 1-\ee$. Then
\begin{eqnarray*}
\nu*\mu(A) & = & \int_G \nu(Ay^{-1}) d\mu(y) \leq \int_K \nu(Ay^{-1})d\mu(y) + \mu(G\setminus K)\\
& \leq & \int_K \nu(Ay^{-1})d\mu(y) + \ee.
\end{eqnarray*}
We set $I=\{ y\in K: \nu(Ay^{-1})>0\}$.
\begin{claim}
\label{cl2} The set $I$ is countable.
\end{claim}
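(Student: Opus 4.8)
The plan is to exploit the defining property of $\hhh$ recorded just above the claim, namely that $K\in\hhh$ forces $K^{-1}K\cap A^{-1}A=\{1\}$. I would first reduce the countability of $I$ to a disjointness statement: the right translates $\{Ay^{-1}:y\in K\}$ form a \emph{pairwise disjoint} family of (universally measurable) subsets of $G$. Granting this, the conclusion is immediate from the fact that $\nu$ is a probability measure, since a probability space can contain only countably many pairwise disjoint sets of positive measure.

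For the disjointness I would fix distinct $y_1,y_2\in K$ and suppose, towards a contradiction, that $Ay_1^{-1}\cap Ay_2^{-1}\neq\varnothing$. Then there are $a_1,a_2\in A$ with $a_1y_1^{-1}=a_2y_2^{-1}$, whence $y_2^{-1}y_1=a_2^{-1}a_1\in A^{-1}A$. On the other hand $y_1,y_2\in K$ gives $y_2^{-1}y_1\in K^{-1}K$, so $y_2^{-1}y_1\in K^{-1}K\cap A^{-1}A=\{1\}$, i.e. $y_1=y_2$, contrary to the choice of $y_1,y_2$. Hence the family $\{Ay^{-1}:y\in K\}$ is pairwise disjoint.

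Finally, to pass from disjointness to countability, for each integer $k\geq 1$ I would set $I_k=\{y\in K:\nu(Ay^{-1})>\tfrac{1}{k}\}$, so that $I=\bigcup_k I_k$. If some $I_k$ contained $k$ distinct points $y_1,\dots,y_k$, then by disjointness $\nu\big(\bigcup_{j=1}^{k}Ay_j^{-1}\big)=\sum_{j=1}^{k}\nu(Ay_j^{-1})>k\cdot\tfrac{1}{k}=1$, contradicting $\nu(G)=1$; thus each $I_k$ is finite and $I$ is countable. The only point demanding care is the bookkeeping of the group multiplication in the second step — one must form $y_2^{-1}y_1$ (and not $y_1y_2^{-1}$) so that the witnessing element genuinely lands in $K^{-1}K\cap A^{-1}A$ — but beyond this there is no real obstacle, as $A$ is universally measurable and hence each $Ay^{-1}$ is $\nu$-measurable. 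I would also note that the non-atomicity of $\mu$ plays no role in the claim itself; it is invoked only afterward, to conclude $\mu(I)=0$ and thereby kill the integral $\int_K\nu(Ay^{-1})\,d\mu(y)$.
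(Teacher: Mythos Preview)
Your argument is correct and is essentially the paper's own proof: you show that distinct $y,z\in K$ (in particular in $I$) give disjoint translates $Ay^{-1}$, $Az^{-1}$ via $K^{-1}K\cap A^{-1}A=\{1\}$, and then invoke the countable chain condition for the probability measure $\nu$. The only difference is cosmetic --- you spell out the $I_k$ decomposition where the paper simply asserts that a family of pairwise disjoint sets of positive $\nu$-measure must be countable.
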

\begin{proof}[Proof of Claim \ref{cl2}]
Notice that if $y, z\in I$ with $y\neq z$, then $Ay^{-1}\cap Az^{-1}=\varnothing$. For if not,
we would have that $1\neq y^{-1}z \in K^{-1}K\cap A^{-1}A$, which contradicts the fact that
$K\in \hhh$. It follows that the family $\{ Ay^{-1}: y\in I\}$ is a family of pairwise disjoint
sets of positive $\nu$-measure. Hence $I$ is countable, as claimed.
\end{proof}
The measure $\mu$ is non-atomic as $\mu\in B_2$. Hence, by Claim \ref{cl2}, we see that
$\mu(I)=0$. It follows that
\[ \int_K \nu(Ay^{-1}) d\mu(y)= \int_I \nu(Ay^{-1}) d\mu(y)\leq\mu(I)=0\]
and so $\nu*\mu(A)\leq\ee$. Since $\ee$ was arbitrary, this implies that $\nu*\mu(A)=0$.
The proof of Theorem A is completed.
\end{proof}
Combining Theorem A with Pettis' Theorem (see \cite[Theorem 9.9]{Kechris}) we get the following
corollary.
\begin{cor}
\label{c1} Let $G$ be an uncountable Polish group and $A$ an analytic subset of $G$.
If $A$ is not generically left Haar-null (in particular, if $A$ is not left Haar-null),
then $1\in\inte(A^{-1}AA^{-1}A)$.
\end{cor}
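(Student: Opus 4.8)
The plan is to argue by contraposition from Theorem A and then feed the resulting regularity of $A^{-1}A$ into Pettis' Theorem. First I would observe that the hypothesis rules out $A^{-1}A$ being meager: by Theorem A, if $A^{-1}A$ were meager then $T_l(A)$ would be co-meager, i.e. $A$ would be generically left Haar-null, contrary to assumption. Hence $A^{-1}A$ is non-meager. (The parenthetical case is subsumed by this, since a co-meager subset of the Polish space $P(G)$ is non-empty; thus $T_l(A)$ co-meager forces $T_l(A)\neq\varnothing$, that is, generically left Haar-null implies left Haar-null, and contrapositively ``not left Haar-null'' implies ``not generically left Haar-null''.)

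Next I would record that $A^{-1}A$ has the Baire property. Since $A$ is analytic, the set $A^{-1}A$ is the image of the analytic set $A\times A$ under the continuous map $(x,y)\mapsto x^{-1}y$, and is therefore itself analytic; in particular it has the Baire property. So at this point $B:=A^{-1}A$ is a non-meager subset of $G$ possessing the Baire property.

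With $B$ non-meager and with the Baire property, Pettis' Theorem \cite[Theorem 9.9]{Kechris} applies and yields that $BB^{-1}$ is a neighborhood of the identity, that is $1\in\inte(BB^{-1})$. The final step is the algebraic symmetry $(A^{-1}A)^{-1}=A^{-1}A$, which gives $BB^{-1}=(A^{-1}A)(A^{-1}A)^{-1}=(A^{-1}A)(A^{-1}A)=A^{-1}AA^{-1}A$, and hence $1\in\inte(A^{-1}AA^{-1}A)$, as required.

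I do not expect a serious obstacle here, as the corollary is essentially a packaging of Theorem A with a standard descriptive-set-theoretic tool. The only two points needing care are, first, that Pettis' Theorem is applied to $A^{-1}A$ rather than to $A$ itself---so that the analyticity of $A$ is used precisely to guarantee the Baire property of $A^{-1}A$ (Theorem A already supplies its non-meagerness)---and, second, the identity $(A^{-1}A)^{-1}=A^{-1}A$ that collapses the product $BB^{-1}$ to the fourfold product appearing in the statement.
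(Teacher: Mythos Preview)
Your argument is correct and is exactly the intended one: the paper merely states that the corollary follows by ``combining Theorem A with Pettis' Theorem,'' and your write-up supplies precisely those details (non-meagerness of $A^{-1}A$ from Theorem A, the Baire property from analyticity, and the symmetry $(A^{-1}A)^{-1}=A^{-1}A$ to rewrite $BB^{-1}$).
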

Clearly Theorem A implies that in non-locally-compact groups, compact sets are generically left
Haar-null. Another application of this form concerns the size of analytic subgroups of Polish
groups. Specifically we have the following corollary which may be considered as the
non-locally-compact analogue of M. Laczkovich's Theorem \cite{La}.
\begin{cor}
\label{c2} Let $G$ be an uncountable Polish group and $H$ be an analytic subgroup of $G$ with
empty interior. Then $H$ is generically left Haar-null.
\end{cor}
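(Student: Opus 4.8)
The plan is to reduce the statement to a direct application of Theorem A. The key observation is that, since $H$ is a subgroup of $G$, it is closed under inverses and under multiplication, so that $H^{-1}H=H$. Therefore it suffices to show that $H$ is meager; once this is known, Theorem A (applied with $A=H$, noting that $H$ is analytic, hence universally measurable, and that $H^{-1}H=H$) yields that $T_l(H)$ is co-meager, which is exactly the assertion that $H$ is generically left Haar-null.

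To prove that $H$ is meager I would argue by contradiction, invoking Pettis' Theorem. Being analytic, the set $H$ has the Baire property. If $H$ were non-meager, then Pettis' Theorem (see \cite[Theorem 9.9]{Kechris}) would give that $H^{-1}H$ contains an open neighborhood of the identity, that is, $1\in\inte(H^{-1}H)$. Since $H^{-1}H=H$, this reads $1\in\inte(H)$ and in particular $\inte(H)\neq\varnothing$, contradicting the hypothesis that $H$ has empty interior. Hence $H$ is meager.

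With $H^{-1}H=H$ meager, Theorem A applies verbatim and completes the proof, establishing that $T_l(H)$ is co-meager.

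I do not expect any substantial obstacle here: all of the analytic and measure-theoretic work is already carried out inside Theorem A, and the corollary rests only on the two elementary facts that $H^{-1}H=H$ for a subgroup and that analytic sets have the Baire property. The only points requiring minor care are to record that $G$ is assumed uncountable, as required by the hypotheses of Theorem A, and to note that the passage from non-meagerness to $\inte(H)\neq\varnothing$ uses precisely the subgroup identity $H^{-1}H=H$ to identify the Pettis neighborhood with a neighborhood inside $H$ itself.
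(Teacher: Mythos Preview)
Your proposal is correct and follows precisely the route the paper intends: the corollary is stated without proof, being presented as an immediate application of Theorem A via the subgroup identity $H^{-1}H=H$ together with Pettis' Theorem to pass from ``empty interior'' to ``meager''. There is nothing to add.
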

\noindent \textbf{What about Haar-null sets?} We would like to remark on the possibility of
extending Theorem A to Haar-null sets instead of merely left Haar-null. As it has been shown
by S. Solecki in \cite{S2}, the Steinhaus property of the $\sigma$-ideal of Haar-null sets fails
in a large number of Polish groups (in a sense, it fails for most non-abelian Polish groups).
Precisely, by \cite[Theorem 6.1]{S2}, if $(H_n)$ is a sequence of countable groups such that
infinitely many of them are not FC (see \cite{S2} for the definition of FC groups), then one can
find a closed set $A\subseteq \prod_n H_n$ which is not Haar-null and $A^{-1}A$ is meager. So,
there is no analogue of Theorem A for Haar-null sets in arbitrary Polish groups. Yet there is one
if we further assume that the group $G$ satisfies the following non-singularity condition.
\begin{enumerate}
\item[(C)] For every analytic and meager subset $A$ of $G$, the conjugate saturation
$[A]=\{x: \exists g\in G \ \exists a\in A \text{ with } x=gag^{-1} \}$ of $A$ is meager.
\end{enumerate}
Clearly every abelian Polish group satisfies (C). Moreover we have the following.
\begin{prop}
\label{p2} Let $G_1$ and $G_2$ be Polish groups. If both $G_1$ and $G_2$ satisfy (C), then so
does $G_1\times G_2$.
\end{prop}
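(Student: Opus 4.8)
The plan is to prove condition (C) for $G_1\times G_2$ by reducing a meager set in the product to meager sets in the factors, using the standard Kuratowski--Ulam machinery. The key observation is that conjugation in the product acts coordinatewise: for $g=(g_1,g_2)$ and $a=(a_1,a_2)$ we have $gag^{-1}=(g_1a_1g_1^{-1},g_2a_2g_2^{-1})$, so the conjugate saturation of the product satisfies $[A]\subseteq [\pi_1(A)]\times[\pi_2(A)]$, where $\pi_i$ is the projection onto $G_i$ and $[\cdot]$ on the right denotes conjugate saturation inside the respective factor. Since projections of analytic sets are analytic, $\pi_1(A)$ and $\pi_2(A)$ are analytic, but the difficulty is that projections of meager sets need not be meager, so I cannot directly conclude that $\pi_i(A)$ is meager and invoke (C) on each factor.

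To get around this, first I would apply the Kuratowski--Ulam theorem to the analytic meager set $A\subseteq G_1\times G_2$. Since $A$ has the Baire property and is meager, Kuratowski--Ulam gives that the vertical section $A_{x}=\{y\in G_2:(x,y)\in A\}$ is meager in $G_2$ for all $x$ outside a meager set $M\subseteq G_1$. The natural approach is then to saturate sectionwise: define $[A]$ and analyze it as a union of conjugates. For a fixed $g=(g_1,g_2)$, the section of the conjugated set $gAg^{-1}$ over a point $x\in G_1$ is a conjugate (by $g_2$) of the section $A_{g_1^{-1}xg_1}$, hence meager in $G_2$ whenever the corresponding section of $A$ is meager. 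Taking the union over all $g$ and using that $[A]$ is analytic (being a continuous image of $G\times A$), I would argue that outside a meager set in $G_1$ every section of $[A]$ is meager in $G_2$, then apply the converse direction of Kuratowski--Ulam (which holds for sets with the Baire property) to conclude that $[A]$ is meager.

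The main obstacle is the control of the exceptional set in $G_1$: to run the sectionwise argument I need the set of ``bad'' first coordinates for $[A]$ to be meager, and this bad set is itself built from a saturation in $G_1$. Concretely, the section $[A]_x$ fails to be meager only if there exists $g_1$ with $g_1^{-1}xg_1$ landing in the projection of the bad-section set of $A$, so the bad set for $[A]$ is contained in the $G_1$-conjugate saturation of an analytic meager subset of $G_1$. Here is precisely where I would invoke hypothesis (C) for $G_1$: it guarantees that this conjugate saturation is meager. Thus the two factors' non-singularity hypotheses are used asymmetrically but symmetrically enough — (C) for $G_1$ controls the exceptional base set, while (C) for $G_2$ (applied fiberwise, or more simply the fiberwise meagerness inherited from conjugation being a homeomorphism of $G_2$) controls the fibers. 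I would close the argument by verifying that all sets involved retain the Baire property so that Kuratowski--Ulam applies in both directions, and that the analyticity of $[A]$ is preserved under the projections and saturations used.
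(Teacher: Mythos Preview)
There is a real gap. The unjustified step is your claim that ``the section $[A]_x$ fails to be meager only if there exists $g_1$ with $g_1^{-1}xg_1$ landing in the bad-section set of $A$.'' Writing $M=\{x\in G_1:A_x\text{ is not meager}\}$, your own section computation gives
\[
[A]_x=\bigcup_{g_1\in G_1}\bigl[A_{g_1^{-1}xg_1}\bigr]_{G_2}.
\]
If $x\notin[M]_{G_1}$ then every $A_{g_1^{-1}xg_1}$ is analytic and meager, and (C) for $G_2$ makes each $[A_{g_1^{-1}xg_1}]_{G_2}$ meager; but the displayed union is \emph{uncountable}, and nothing you have written forces it to be meager. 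Neither ``(C) for $G_2$ applied fiberwise'' nor ``conjugation being a homeomorphism of $G_2$'' helps, since the troublesome union is indexed by the first-coordinate conjugator $g_1$, not by $g_2$. In fact $\bigcup_{g_1}A_{g_1^{-1}xg_1}$ is precisely the $G_2$-projection of $A\cap\bigl([x]_{G_1}\times G_2\bigr)$, and you yourself began by noting that projections of meager sets need not be meager.

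The paper's proof sidesteps this by saturating one coordinate at a time. First form $A_1=\{(x,z):\exists g_2,y\ \text{with}\ (x,y)\in A,\ y=g_2zg_2^{-1}\}$; its $x$-sections are $(A_1)_x=[A_x]_{G_2}$, meager for co-meager $x$ by (C) for $G_2$, so the converse Kuratowski--Ulam (applicable since $A_1$ is analytic) yields that $A_1$ is meager in the product. Now repeat with the roles of the coordinates swapped, using (C) for $G_1$, to obtain a meager analytic set $A_2$, and check that $A_2=[A]$. The idea your outline is missing is this return to \emph{product} meagerness after each single-coordinate saturation: it is what guarantees that the next saturation again acts on genuinely meager sections, rather than on an uncontrolled uncountable union of them.
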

\begin{proof}
Let $A\subseteq G_1\times G_2$ be analytic and meager. By the Kuratowski-Ulam theorem (see
\cite[Theorem 8.41]{Kechris}), we have
\[ \forall^* x\in G_1 \text{ the section } A_x=\{y\in G_2: (x,y)\in A\} \text{ of } A
\text{ is meager.} \]
As $G_2$ satisfies (C), by another application of the Kuratowski-Ulam theorem we get that
\[ A_1=\{ (x,z): \exists g_2, y\in G_2 \text{ with } (x,y)\in A \text{ and } y=g_2zg_2^{-1} \}\]
is analytic and meager. With the same reasoning we see that the set
\[ A_2=\{ (w,z): \exists g_1, x\in G_1 \text{ with } (x,z)\in A_1 \text{ and } x=g_1wg_1^{-1} \}\]
is analytic and meager too. Noticing that $A_2=[A]$, the result follows.
\end{proof}
For groups that satisfy (C) we have the following strengthening of Theorem A.
\begin{prop}
\label{p3} Let $G$ be an uncountable Polish group that satisfies (C). If $A$ is an analytic
subset of $G$ such that $A^{-1}A$ is meager, then $T(A)$ is co-meager.
\end{prop}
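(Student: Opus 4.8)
The plan is to reduce the two-sided statement to the one-sided machinery of Theorem A, applied simultaneously to all right translates of $A$. First I would record the observation that
\[ T(A)=\bigcap_{g_2\in G} T_l(Ag_2), \]
since requiring $\mu(g_1Ag_2)=0$ for all $g_1,g_2$ is exactly the requirement that $\mu\in T_l(Ag_2)$ for every fixed $g_2$. Each $Ag_2$ is analytic, and $(Ag_2)^{-1}(Ag_2)=g_2^{-1}(A^{-1}A)g_2$ is a homeomorphic image of the meager set $A^{-1}A$, hence meager; so Theorem A already gives that every single $T_l(Ag_2)$ is co-meager. The obstacle is that this is an \emph{uncountable} intersection of co-meager sets, which in general need not be co-meager. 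This is precisely the point at which condition (C) must be used.

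To bypass the uncountable intersection I would not intersect the conclusions but instead build one hereditary witnessing family that serves all $g_2$ at once. The key is that $\bigcup_{g_2\in G} g_2^{-1}(A^{-1}A)g_2=[A^{-1}A]$, the conjugate saturation of $A^{-1}A$. Now $A^{-1}A$ is analytic and meager, so $[A^{-1}A]$ is analytic (a continuous image of $G\times(A^{-1}A)$) and, by condition (C), meager. Since $1\in[A^{-1}A]$ and $G$ is perfect, I may cover $[A^{-1}A]\setminus\{1\}$ by closed nowhere dense sets $(C_n)$ with $1\notin C_n$, exactly as in the proof of Theorem A but with $[A^{-1}A]$ in the role of $A^{-1}A$. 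Setting $\uuu_n=\{K\in K(G):K^{-1}K\cap C_n=\varnothing\}$, the argument of Claim \ref{cl1} applies verbatim (it only uses that each $C_n$ is closed nowhere dense) to show each $\uuu_n$ is open, dense and hereditary. Thus $\hhh=\bigcap_n\uuu_n$ is a hereditary, dense $G_\delta$ subset of $K(G)$ with $(\uuu_n)$ as normal form, and every $K\in\hhh$ satisfies $K^{-1}K\cap[A^{-1}A]=\{1\}$.

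Next I would invoke Proposition \ref{p1} to conclude that $B_1=\{\mu:\forall\ee>0\ \exists K\in\hhh\text{ with }\mu(K)\geq1-\ee\}$ is co-meager, and intersect with the co-meager set of non-atomic measures to obtain a co-meager $B_2$. It then remains to verify $B_2\subseteq T(A)$. Fixing $\mu\in B_2$, $g_2\in G$ and $\nu\in P(G)$, I would reproduce the estimate from the proof of Theorem A, bounding $\nu*\mu(Ag_2)$ by $\ee+\int_K\nu(Ag_2y^{-1})\,d\mu(y)$ for a suitable $K\in\hhh$ with $\mu(K)\geq1-\ee$. The decisive point is the disjointness claim in its conjugated form: if $y,z\in K$ are distinct and $Ag_2y^{-1}\cap Ag_2z^{-1}\neq\varnothing$, then a short manipulation yields $z^{-1}y\in g_2^{-1}(A^{-1}A)g_2\subseteq[A^{-1}A]$, while also $z^{-1}y\in K^{-1}K$; hence $z^{-1}y\in K^{-1}K\cap[A^{-1}A]=\{1\}$, forcing $y=z$, a contradiction. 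Therefore $\{Ag_2y^{-1}:y\in K,\ \nu(Ag_2y^{-1})>0\}$ is a pairwise disjoint family of sets of positive $\nu$-measure, so its index set is countable, hence $\mu$-null by non-atomicity; this gives $\nu*\mu(Ag_2)\leq\ee$, and letting $\ee\to0$ followed by the Fact yields $\mu\in T_l(Ag_2)$. As $g_2$ was arbitrary, $\mu\in T(A)$.

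The one genuinely new step I expect to be the crux is the passage through the conjugate saturation: everything else is a transcription of the proof of Theorem A, and the whole argument only closes because condition (C) upgrades the meagerness of $A^{-1}A$ to that of $[A^{-1}A]$, which is exactly what lets a single hereditary family $\hhh$ separate $K^{-1}K$ from every conjugate $g_2^{-1}(A^{-1}A)g_2$ simultaneously.
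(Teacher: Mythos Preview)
Your proposal is correct and follows essentially the same route as the paper: both arguments use condition (C) to pass from the meagerness of $A^{-1}A$ to that of $[A^{-1}A]$, rerun the construction of $\hhh$ from Theorem A with $[A^{-1}A]$ in place of $A^{-1}A$, and then verify $B_2\subseteq T(A)$ via the conjugated disjointness observation $Ag_2y^{-1}\cap Ag_2z^{-1}\neq\varnothing\Rightarrow z^{-1}y\in K^{-1}K\cap[A^{-1}A]$. Your opening paragraph explaining why the naive uncountable intersection $\bigcap_{g_2}T_l(Ag_2)$ does not suffice is helpful motivation that the paper omits, but the substantive argument is the same.
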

\begin{proof}
The proof is similar to the proof of Theorem A, and so, we shall only indicate the necessary
changes. Let $A\subseteq G$ be analytic such that $A^{-1}A$ is meager. Notice that $A^{-1}A$
is analytic. The group $G$ satisfies (C). It follows that the set $[A^{-1}A]$ is meager too.
Arguing as in the proof of Theorem A this implies that there exists a co-meager set $B_2$ of
non-atomic Borel probability measures on $G$ such that for every $\mu\in B_2$ and every $\ee>0$
there exists $K\subseteq G$ compact with $\mu(K)\geq 1-\ee$ and $K^{-1}K\cap [A^{-1}A]=\{1\}$.
We claim that $B_2\subseteq T(A)$. To this end, it is enough to show that for every $\mu\in B_2$,
every $\nu\in P(G)$ and every $x\in G$ we have $\nu*\mu(Ax)=0$. Let $\ee>0$ arbitrary and pick
$K\subseteq G$ compact as described above. Then
\[ \nu*\mu(Ax)\leq \int_K \nu(Axy^{-1}) d\mu(y) + \ee. \]
We set $I=\{y\in K: \nu(Axy^{-1})>0\}$. Observe that if $y, z\in I$ with $y\neq z$,
then $(Axy^{-1})\cap (Axz^{-1})=\varnothing$ (for if not, we would have that
$1\neq y^{-1}z\in K^{-1}K \cap [A^{-1}A]$). By the countable chain condition of $\nu$, we get
that $I$ is countable and the result follows.
\end{proof}
\begin{rem}
The $\sigma$-ideal of generically left Haar-null sets is a quite satisfactory $\sigma$-ideal of
measure-theoretic small sets in arbitrary Polish groups. Beside Theorem A, this is also supported
by the results in \cite{D1} asserting that every analytic and generically left Haar-null subset
$A$ of $G$ can be covered by a \textit{Borel} set $B$ with the same property. The fact that this
ideal is well-behaved is also reflected in the complexity of the collection of all closed
generically left Haar-null sets (in the Effros-Borel structure). It is much better than the
one of closed Haar-null sets, at least in abelian Polish groups. Specifically, it follows by the
results of S. Solecki in \cite{S1}, that in non-locally-compact abelian Polish groups the
$\sigma$-ideal of closed generically Haar-null sets is $\PB^1_1$-complete. The corresponding
collection of closed Haar-null sets is much more complicated (it is both $\SB^1_1$ and
$\PB^1_1$-hard).
\end{rem}

%-------------------------------------------------------------------%
%                           Bibliography                            %
%-------------------------------------------------------------------%

\end{document}